\newtheorem{theorem}{Theorem}[section]
\newtheorem{lemma}[theorem]{Lemma}
\newtheorem{corollary}[theorem]{Corollary}
\theoremstyle{definition}
\newtheorem{definition}[theorem]{Definition}
\newtheorem{conj}[theorem]{Conjecture}
\theoremstyle{remark}
\numberwithin{equation}{section}
\begin{document}

\title{Counting Points on Dwork Hypersurfaces and $p$-adic Gamma Function}


\author{Rupam Barman}
\address{Department of Mathematics, Indian Institute of Technology, Hauz Khas, New Delhi-110016, INDIA}
\curraddr{}
\email{rupam@maths.iitd.ac.in}

\author{Hasanur Rahman}
\address{Department of Mathematics, Indian Institute of Technology, Hauz Khas, New Delhi-110016, INDIA}
\curraddr{}
\email{hasrah93@gmail.com}
\thanks{}
\author{Neelam Saikia}
\address{Department of Mathematics, Indian Institute of Technology, Hauz Khas, New Delhi-110016, INDIA}
\curraddr{}
\email{nlmsaikia1@gmail.com}
\thanks{}

\subjclass[2010]{Primary: 11G20, 33E50; Secondary: 33C99, 11S80,
11T24.}
\date{29th November, 2015}
\keywords{Character of finite fields, Gaussian hypergeometric series, Teichm\"{u}ller character,
$p$-adic Gamma function, Dwork hypersurfaces.}
\thanks{We are grateful to Wadim Zudilin and Dermot McCarthy for many helpful comments on an initial draft of the manuscript.}
\begin{abstract}
We express the number of points on the Dwork hypersurface
$$X_{\lambda}^d: x_1^d+x_2^d+\cdots +x_d^d=d\lambda x_1x_2\cdots x_d$$
over a finite field of order $q \not \equiv 1 \pmod{d}$ in terms of McCarthy's $p$-adic hypergeometric function for any odd prime $d$. 
\end{abstract}
\maketitle
\section{Introduction and statement of results}
Let $p$ be an odd prime, and let $\mathbb{F}_q$ denote the finite field with $q$ elements, where $q=p^r, r\geq 1$.
Let $\mathbb{Z}_p$ denote the ring of $p$-adic integers.
Let $\Gamma_p(\cdot)$ denote the Morita's $p$-adic gamma function, and let $\omega$ denote the
Teichm\"{u}ller character of $\mathbb{F}_q$. We denote by $\overline{\omega}$ the inverse of $\omega$.
For $x \in \mathbb{Q}$ we let $\lfloor x\rfloor$ denote the greatest integer less than
or equal to $x$ and $\langle x\rangle$ denote the fractional part of $x$, i.e., $x-\lfloor x\rfloor$.
Also, we denote by $\mathbb{Z}^{+}$ and $\mathbb{Z}_{\geq 0}$
the set of positive integers and non negative integers, respectively. We now define the McCarthy's $p$-adic hypergeometric series $_{n}G_{n}[\cdots]$
as follows.
\begin{definition}\cite[Definition 5.1]{mccarthy2} \label{defin1}
Let $q=p^r$, for $p$ an odd prime and $r \in \mathbb{Z}^+$, and let $t \in \mathbb{F}_q$.
For $n \in \mathbb{Z}^+$ and $1\leq i\leq n$, let $a_i$, $b_i$ $\in \mathbb{Q}\cap \mathbb{Z}_p$.
Then the function $_{n}G_{n}[\cdots]$ is defined by
\begin{align}
&_nG_n\left[\begin{array}{cccc}
             a_1, & a_2, & \ldots, & a_n \\
             b_1, & b_2, & \ldots, & b_n
           \end{array}|t
 \right]_q:=\frac{-1}{q-1}\sum_{j=0}^{q-2}(-1)^{jn}~~\overline{\omega}^j(t)\notag\\
&\times \prod_{i=1}^n\prod_{k=0}^{r-1}(-p)^{-\lfloor \langle a_ip^k \rangle-\frac{jp^k}{q-1} \rfloor -\lfloor\langle -b_ip^k \rangle +\frac{jp^k}{q-1}\rfloor}
 \frac{\Gamma_p(\langle (a_i-\frac{j}{q-1})p^k\rangle)}{\Gamma_p(\langle a_ip^k \rangle)}
 \frac{\Gamma_p(\langle (-b_i+\frac{j}{q-1})p^k \rangle)}{\Gamma_p(\langle -b_ip^k \rangle)}.\notag
\end{align}
\end{definition}
\par
Koblitz \cite{kob2} developed a formula for the number of points on diagonal hypersurfaces in the Dwork family in terms of Gauss sums. In \cite{goodson}, H. Goodson specializes Koblitz's formula to the family of Dwork K3 surfaces. She gives an expression for the number of points on the family of Dwork K3 surfaces $X_{\lambda}^4: x_1^4+x_2^4+x_3^4+x_4^4=4\lambda x_1x_2x_3x_4$ in the projective plane $\mathbb{P}^4(\mathbb{F}_q)$ over a finite field $\mathbb{F}_q$ in terms of Greene's finite field hypergeometric functions \cite{greene} under the condition that $q\equiv 1 \pmod{4}$. 
She then considers the higher dimensional Dwork hypersurfaces $$X_{\lambda}^d: x_1^d+x_2^d+\cdots +x_d^d=d\lambda x_1x_2\cdots x_d.$$
She gives a formula for the number of points on $X_{\lambda}^d$ in terms of Gaussian hypergeometric series and Gauss sums when $q\equiv 1 \pmod{d}$. For primes $p\not \equiv 1 \pmod{d}$, she conjectures the following.
\begin{conj}\cite[Conjecture 8.2]{goodson}
 Let $d$ be an odd prime and $p$ a prime number such that $p\not \equiv 1 \pmod{d}$. The number of points over $\mathbb{F}_p$ on the Dwork hypersurface is given by
 \begin{align}
\#X_{\lambda}^d(\mathbb{F}_p)=\frac{p^{d-1}-1}{p-1}+\frac{1}{p-1}+{_{d-1}G}_{d-1}\left[\begin{array}{cccc}
                                                             \frac{1}{d}, & \frac{2}{d}, & \ldots, & \frac{d-1}{d} \\
                                                             0, & 0, & \ldots, & 0
                                                           \end{array}|\lambda^d
\right]_p.\notag
 \end{align}
\end{conj}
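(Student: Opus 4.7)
The plan is to express $\#X_\lambda^d(\mathbb{F}_q)$ first as a Gauss-sum expansion and then, via the Gross--Koblitz formula, rewrite every Gauss sum in terms of products of $p$-adic gamma-function values so that the resulting sum matches Definition~\ref{defin1}. Passing from projective to affine and applying additive-character orthogonality, I would begin with
$$\#X_\lambda^d(\mathbb{F}_q)=\frac{q^{d-1}-1}{q-1}+\frac{1}{q(q-1)}\sum_{t\in\mathbb{F}_q^*}\sum_{x_1,\ldots,x_d\in\mathbb{F}_q}\psi\!\left(t\sum_{i=1}^d x_i^d-td\lambda\prod_{i=1}^d x_i\right)$$
for a fixed nontrivial additive character $\psi$ of $\mathbb{F}_q$. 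I would then expand each occurrence of $\psi$ on $\mathbb{F}_q^*$ as $\psi(y)=\frac{1}{q-1}\sum_{j=0}^{q-2}\omega^{-j}(y)\,g(\omega^j)$, handling the boundary $y=0$ contributions separately.

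The hypothesis $q\not\equiv 1\pmod d$ enters decisively. Since $d$ is prime and $d\nmid q-1$, we have $\gcd(d,q-1)=1$, so multiplication by $d$ is a bijection on $\mathbb{Z}/(q-1)\mathbb{Z}$. Thus, unlike in Koblitz's and Goodson's setting where $d$ distinct $d$-th power characters contribute, the $d$ identical monomials $x_i^d$ here compel a single summation index $j$, and character orthogonality over $t\in\mathbb{F}_q^*$ cleanly reduces to one substitution $j\mapsto ej\pmod{q-1}$ with $de\equiv 1\pmod{q-1}$. Applying the Davenport--Hasse product relation to rewrite $g(\omega^j)^d$ in terms of $g(\omega^{dj})$ and explicit normalizing Gauss sums, the entire character sum collapses to
$$\frac{1}{q-1}\sum_{j=0}^{q-2}\omega^{-j}(\lambda^d)\,G_j,$$
where $G_j$ is a product of Gauss sum quotients, with the $j=0$ term contributing exactly $-\tfrac{1}{q-1}$ and thereby cancelling the isolated $\tfrac{1}{q-1}$ in the statement of the conjecture.

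The final step is to apply the Gross--Koblitz formula to each Gauss sum in $G_j$ and reorganize the resulting $\Gamma_p$-values and powers of $-p$ into the shape dictated by Definition~\ref{defin1} with parameters $a_i=i/d$ and $b_i=0$. I expect this matching to be the main obstacle: the floor-function exponents $\lfloor\langle a_ip^k\rangle-jp^k/(q-1)\rfloor$ and $\lfloor\langle -b_ip^k\rangle+jp^k/(q-1)\rfloor$ must be tracked uniformly for $k=0,\ldots,r-1$, and correctly balancing the numerator and denominator $\Gamma_p$-quotients requires careful use of the $p$-adic Gauss multiplication formula together with the reflection identity for $\Gamma_p$. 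Crucially, the coprimality $\gcd(d,q-1)=1$ guarantees that no intermediate fractional part $\langle(i/d)p^k-jp^k/(q-1)\rangle$ lies in $\mathbb{Z}$, so all reflection denominators remain non-degenerate and the term-by-term matching should go through.
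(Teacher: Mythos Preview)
The statement you are asked to prove is, as written, \emph{false}: the paper explicitly disproves Conjecture~8.2 and replaces it by Theorem~\ref{MT_1}, in which the term $\tfrac{1}{q-1}$ is absent and the sign in front of the ${}_{d-1}G_{d-1}$-value is \emph{negative}. You already detected the first discrepancy (your $j=0$ term cancels the stray $\tfrac{1}{q-1}$), but you did not flag the sign issue. Any honest execution of your outline will therefore end at the formula of Theorem~\ref{MT_1}, not at the conjectured one.

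Apart from that, your strategy is essentially the paper's. The paper also passes to the affine count, uses additive-character orthogonality, expands each $\theta$ via Gauss sums, and exploits $\gcd(d,q-1)=1$ to collapse the multi-index sum to a single index $a$, obtaining $A=\sum_{a=0}^{q-2}g(T^{-a})^d g(T^{da})T^{-da}(-d\lambda)$. It then applies Gross--Koblitz and matches the result to Definition~\ref{defin1}. Two concrete points where your sketch diverges or is incomplete:
\begin{itemize}
\item You propose the Davenport--Hasse product relation to relate $g(\omega^j)^d$ and $g(\omega^{dj})$. That relation requires a multiplicative character of order $d$, which does not exist on $\mathbb{F}_q^\times$ when $d\nmid q-1$. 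The paper circumvents this by working entirely on the $p$-adic side: after Gross--Koblitz, it invokes the $\Gamma_p$-multiplication identity (Lemma~\ref{lemma4}, equation~\eqref{eq9} with $t=d$) to expand $\Gamma_p(\langle -dap^i/(q-1)\rangle)$ into the product $\prod_{h=1}^{d}\Gamma_p(\langle(h/d-a/(q-1))p^i\rangle)$ over the correct normalization. This is the substitute for Davenport--Hasse you actually need.
\item The ``main obstacle'' you anticipate---matching the floor exponents to those in Definition~\ref{defin1}---is handled in the paper by the elementary but nontrivial identity of Lemma~\ref{lemma5}, which shows $d\lfloor ap^i/(q-1)\rfloor+\lfloor -dap^i/(q-1)\rfloor=(d-1)\lfloor ap^i/(q-1)\rfloor+\sum_{h=1}^{d-1}\lfloor\langle hp^i/d\rangle-ap^i/(q-1)\rfloor-1$ for $1\le a\le q-2$. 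You should isolate and prove this identity separately; without it the bookkeeping you describe will not close. The extra ``$-1$'' on the right is precisely what produces, after summing over $i$ and exponentiating $(-p)$, the global factor $q$ whose extraction both fixes the sign of the $G$-term and eliminates the spurious $\tfrac{1}{q-1}$.
\end{itemize}
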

In this article we prove that the above conjecture is not correct. We correct the statement of the conjecture and prove it for any finite field of order 
$q=p^r\not \equiv 1 \pmod{d}$. The statement of the main result is as follows.
\begin{theorem}\label{MT_1}
Let $d$ be an odd prime and $q=p^r$ be a prime power such that $q\not\equiv 1 \pmod{d}$ and $p\neq d$. Then the number of points on the Dwork hypersurface $$X_{\lambda}^d: x_1^d+x_2^d+\cdots+x_d^d=d\lambda x_1x_2\cdots x_d$$ 
in $\mathbb{P}^d(\mathbb{F}_q)$ is given by
\begin{align}
\#X_{\lambda}^d(\mathbb{F}_q)=\frac{q^{d-1}-1}{q-1}-{_{d-1}G}_{d-1}\left[\begin{array}{cccc}
                                                             \frac{1}{d}, & \frac{2}{d}, & \ldots, & \frac{d-1}{d} \\
                                                             0, & 0, & \ldots, & 0
                                                           \end{array}|\lambda^d
\right]_q.\notag
\end{align}
\end{theorem}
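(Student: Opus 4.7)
The strategy is twofold: (a) evaluate the affine point count via orthogonality of additive characters, exploiting the bijection $x\mapsto x^d$ on $\mathbb{F}_q$ available because $\gcd(d,q-1)=1$; and (b) convert the resulting Gauss-sum expression into $p$-adic gamma values via the Gross--Koblitz formula, matching it against the Teichm\"{u}ller expansion in Definition \ref{defin1}.

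For step (a), I would write $\#X_\lambda^d(\mathbb{F}_q)=(N-1)/(q-1)$, where $N$ is the affine solution count, and fix a nontrivial additive character $\psi$ of $\mathbb{F}_q$. Orthogonality yields $qN=q^d+\sum_{t\neq 0}\sum_{\mathbf{x}}\psi(tF(\mathbf{x}))$ with $F=x_1^d+\cdots+x_d^d-d\lambda x_1\cdots x_d$. For each $t\neq 0$ I would split the $\mathbf{x}$-sum by whether some coordinate $x_i$ vanishes (``degenerate'') or all $x_i\neq 0$ (``principal''). On the degenerate part the coupling $\psi(-d\lambda t\,x_1\cdots x_d)$ is trivial, and combining $\sum_{x\neq 0}\psi(tx^d)=\sum_{u\neq 0}\psi(tu)=-1$ (via the bijection $u=x^d$) with $\sum_{k=0}^{d}\binom{d}{k}(-1)^k=0$ shows the degenerate contribution is $1$. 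On the principal part, expand $\psi(-d\lambda t\,x_1\cdots x_d)=(q-1)^{-1}\sum_{\chi}g(\chi)\overline{\chi}(-d\lambda t\,x_1\cdots x_d)$ and evaluate each inner sum as $\sum_{x\neq 0}\overline{\chi}(x)\psi(tx^d)=\chi^e(t)\,g(\overline{\chi^e})$ via the same bijection, where $e$ is the inverse of $d$ modulo $q-1$. Because $ed\equiv 1\pmod{q-1}$, the factor $\chi^{ed}(t)=\chi(t)$ cancels the $\overline{\chi}(t)$ from $\overline{\chi}(-d\lambda t)$, so the whole expression becomes $t$-independent. Summing over $t\neq 0$ and collecting terms gives
\[
\#X_\lambda^d(\mathbb{F}_q)=\frac{q^{d-1}-1}{q-1}+\frac{1}{q-1}+\frac{1}{q(q-1)}\sum_{\chi\neq\varepsilon}g(\chi)\,\overline{\chi}(-d\lambda)\,g(\overline{\chi^e})^{d},
\]
with $\varepsilon$ the trivial character.

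For step (b), reindex by setting $\chi=\omega^{jd}$ (a bijection on $\{0,1,\ldots,q-2\}$ because $\gcd(d,q-1)=1$) so that $\overline{\chi}(\lambda)=\overline{\omega}^{\,j}(\lambda^d)$ matches the Teichm\"{u}ller factor $\overline{\omega}^{\,j}(t)$ in Definition \ref{defin1}. The $j=0$ summand (i.e.\ $\chi=\varepsilon$) contributes $-1/(q-1)$, which exactly accounts for the stray $1/(q-1)$ in the display above. For $j\neq 0$, the functional equation $g(\omega^{jd})g(\overline{\omega}^{jd})=\omega^{jd}(-1)q$ replaces $g(\omega^{jd})$ by $\omega^{jd}(-1)q/g(\overline{\omega}^{jd})$, collapsing the expression to $\overline{\omega}^{jd}(d)\cdot g(\overline{\omega}^{\,j})^{d}/g(\overline{\omega}^{jd})$ times the Teichm\"{u}ller factor $\overline{\omega}^{\,j}(\lambda^d)$.

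The final step applies Gross--Koblitz, $g(\overline{\omega}^{\,m})=-(-p)^{\sum_{k=0}^{r-1}\langle mp^k/(q-1)\rangle}\prod_{k=0}^{r-1}\Gamma_p(\langle mp^k/(q-1)\rangle)$, to rewrite the Gauss-sum ratio in terms of Morita's $p$-adic gamma function. The principal obstacle is the bookkeeping here: one must verify that the $p$-adic multiplication properties of $\Gamma_p$ convert the surviving ratio $\prod_k\Gamma_p(\langle jp^k/(q-1)\rangle)^{d}/\Gamma_p(\langle jdp^k/(q-1)\rangle)$ into the product $\prod_{i=1}^{d-1}\prod_k\Gamma_p(\langle(i/d-j/(q-1))p^k\rangle)\Gamma_p(\langle jp^k/(q-1)\rangle)/\Gamma_p(\langle ip^k/d\rangle)$ demanded by Definition \ref{defin1}, and that the collected exponents of $-p$ reassemble into the floor-function exponents $-\lfloor\langle ip^k/d\rangle-jp^k/(q-1)\rfloor-\lfloor jp^k/(q-1)\rfloor$. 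The residual character factor $\overline{\omega}^{jd}(d)$ and the sign $(-1)^{j(d-1)}$ must then cancel using the $p$-adic structure of the Teichm\"{u}ller character at $-d$. Once all of this matches, the identification of the character-sum expression with $-{_{d-1}G_{d-1}}[1/d,2/d,\ldots,(d-1)/d;0,0,\ldots,0\,|\,\lambda^d]_q$ is complete.
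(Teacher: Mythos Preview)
Your plan is correct and follows essentially the same route as the paper: both use additive-character orthogonality together with the bijectivity of $x\mapsto x^d$ on $\mathbb{F}_q$ to reduce the count to the single Gauss-sum expression $\sum_a g(\overline{\omega}^a)^d\,g(\omega^{da})\,\overline{\omega}^{da}(-d\lambda)$, and then apply Gross--Koblitz, the $p$-adic multiplication formula for $\Gamma_p$ (Lemma~\ref{lemma4}), and a floor-function identity (Lemma~\ref{lemma5}) to match Definition~\ref{defin1}. The only cosmetic differences are that the paper computes the degenerate piece by comparison with the Fermat hypersurface $x_1^d+\cdots+x_d^d=0$ rather than via your direct binomial count, and it applies Gross--Koblitz to $g(\omega^{da})$ directly rather than first invoking the reflection formula $g(\chi)g(\overline{\chi})=\chi(-1)q$.
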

We use the technique of D. McCarthy \cite{dermot2} to prove the above theorem, and the case $d=5$ is dealt by McCarthy in the same paper.
We note that the expression in the above conjecture contains an error term as seen in the above theorem. Also, the sign of the $G$-function is negative when $d$ is an odd prime.
\par For any $\lambda$ and $q=p^r \not\equiv 1 \pmod{3}$, we have 
$$\#X_{\lambda}^3(\mathbb{F}_q)=1+\# \{(x, y)\in \mathbb{F}_q^2: x^3+y^3+1=3\lambda xy\}.$$
Now, from Therem \ref{MT_1} and Theorem 3.3 of \cite{BS2}, we have the following transformation for the $_{2}G_2$-function. 
\begin{corollary}
 Let $\lambda \neq 0$ and $\lambda^3\neq 1$. Let $p\geq 5$ be a prime and $q=p^r \not\equiv 1 \pmod{3}$. Then
 $${_{2}G}_{2}\left[\begin{array}{cc}
                                                             \frac{1}{3}, & \frac{2}{3} \\
                                                             0, & 0 
                                                             \end{array}|\lambda^3
\right]_q=q\phi(-3\lambda){_{2}G}_{2}\left[\begin{array}{cc}
                                                             \frac{1}{2}, & \frac{1}{2} \\
                                                             \frac{1}{6}, & \frac{5}{6} 
                                                             \end{array}|\frac{1}{\lambda^3}
\right]_q,$$
where $\phi$ is the quadratic character on $\mathbb{F}_q$.
\end{corollary}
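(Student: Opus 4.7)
The plan is to compute the quantity $\#X_\lambda^3(\mathbb{F}_q)$ in two different ways and equate the results. On one hand, Theorem \ref{MT_1} specialized to $d=3$ gives
\begin{align}
\#X_\lambda^3(\mathbb{F}_q)=\frac{q^2-1}{q-1}-{_2G_2}\left[\begin{array}{cc}\frac{1}{3}, & \frac{2}{3}\\ 0,&0\end{array}|\lambda^3\right]_q = (q+1)-{_2G_2}\left[\begin{array}{cc}\frac{1}{3}, & \frac{2}{3}\\ 0,&0\end{array}|\lambda^3\right]_q.\notag
\end{align}
On the other hand, the identity recorded in the paragraph preceding the corollary expresses the projective point count as $1+N_\lambda$, where $N_\lambda:=\#\{(x,y)\in\mathbb{F}_q^2:x^3+y^3+1=3\lambda xy\}$ is the number of affine points on the associated plane cubic.

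Next I would invoke Theorem 3.3 of \cite{BS2}, which provides a closed form for $N_\lambda$ (under the restrictions $\lambda\neq 0$, $\lambda^3\neq 1$, $p\geq 5$, $q\not\equiv 1\pmod 3$) in terms of the $_2G_2$-function with parameters $\bigl(\frac{1}{2},\frac{1}{2};\frac{1}{6},\frac{5}{6}\bigr)$ evaluated at $1/\lambda^3$, with a quadratic-character prefactor $q\phi(-3\lambda)$. Substituting this expression into $\#X_\lambda^3(\mathbb{F}_q)=1+N_\lambda$ produces a second formula for $\#X_\lambda^3(\mathbb{F}_q)$ of the shape $(q+1)-q\phi(-3\lambda){_2G_2}[\cdots|1/\lambda^3]_q$.

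Equating the two expressions for $\#X_\lambda^3(\mathbb{F}_q)$ cancels the common constant term $q+1$ and yields the desired identity
\begin{align}
{_2G_2}\left[\begin{array}{cc}\frac{1}{3}, & \frac{2}{3}\\ 0,&0\end{array}|\lambda^3\right]_q = q\,\phi(-3\lambda)\,{_2G_2}\left[\begin{array}{cc}\frac{1}{2}, & \frac{1}{2}\\ \frac{1}{6},&\frac{5}{6}\end{array}|\tfrac{1}{\lambda^3}\right]_q.\notag
\end{align}
The only real obstacle is bookkeeping: one must verify that the hypotheses of Theorem \ref{MT_1} and those of \cite[Theorem 3.3]{BS2} are simultaneously met (which is exactly what the conditions $\lambda\neq 0$, $\lambda^3\neq 1$, $p\geq 5$ and $q\not\equiv 1\pmod 3$ ensure), and that the normalizations of the two $_2G_2$-functions (including signs, the factor $q\phi(-3\lambda)$, and the choice of argument $\lambda^3$ versus $1/\lambda^3$) line up precisely as stated. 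No new character-sum manipulation is needed beyond what is already packaged into the two input theorems.
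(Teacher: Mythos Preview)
Your proposal is correct and is precisely the argument the paper intends: the corollary is stated immediately after the identity $\#X_\lambda^3(\mathbb{F}_q)=1+\#\{(x,y):x^3+y^3+1=3\lambda xy\}$ and is asserted to follow from Theorem~\ref{MT_1} together with \cite[Theorem~3.3]{BS2}, with no separate proof given. Your write-up simply spells out this comparison of the two point-count formulas, so there is nothing to add.
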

\section{Preliminaries}
Let $\widehat{\mathbb{F}_q^\times}$ denote the set of all multiplicative characters $\chi$ on $\mathbb{F}_q^{\times}$.
It is known that $\widehat{\mathbb{F}_q^\times}$ is a cyclic group of order $q-1$
under the multiplication of characters: $(\chi\psi)(x)=\chi(x)\psi(x)$, $x\in \mathbb{F}_q^{\times}$.
The domain of each
$\chi \in \widehat{\mathbb{F}_q^{\times}}$ is extended to $\mathbb{F}_q$ by setting $\chi(0):=0$ including the trivial character $\varepsilon$.
Multiplicative characters satisfy the following \emph{orthogonality relations}.
\begin{lemma}\emph{(\cite[Chapter 8]{ireland}).}\label{lemma2} We have
\begin{enumerate}
\item $\displaystyle\sum_{x\in\mathbb{F}_q}\chi(x)=\left\{
                                  \begin{array}{ll}
                                    q-1 & \hbox{if~ $\chi=\varepsilon$;} \\
                                    0 & \hbox{if ~~$\chi\neq\varepsilon$.}
                                  \end{array}
                                \right.$
\item $\displaystyle\sum_{\chi\in \widehat{\mathbb{F}_q^\times}}\chi(x)~~=\left\{
                            \begin{array}{ll}
                              q-1 & \hbox{if~~ $x=1$;} \\
                              0 & \hbox{if ~~$x\neq1$.}
                            \end{array}
                          \right.$
\end{enumerate}
\end{lemma}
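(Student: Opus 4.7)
The plan is to establish both orthogonality relations by the classical character-shift argument: in each part, the sum is shown to be invariant under multiplication by a scalar different from $1$, and must therefore vanish. The only non-formal ingredient is the existence of enough characters to separate points of $\mathbb{F}_q^\times$, which will come from the cyclicity of the group.

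For part (1), I would first dispose of the case $\chi=\varepsilon$: since $\varepsilon(0)=0$ while $\varepsilon(x)=1$ for $x\in\mathbb{F}_q^\times$, the sum equals $|\mathbb{F}_q^\times|=q-1$. For a nontrivial $\chi$, nontriviality supplies some $y\in\mathbb{F}_q^\times$ with $\chi(y)\neq 1$. Letting $S=\sum_{x\in\mathbb{F}_q}\chi(x)$, the fact that $\chi(0)=0$ lets me restrict the sum to $\mathbb{F}_q^\times$, and the bijection $x\mapsto yx$ on $\mathbb{F}_q^\times$ together with multiplicativity gives $S=\sum_x\chi(yx)=\chi(y)\,S$. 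Hence $(\chi(y)-1)\,S=0$, forcing $S=0$.

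Part (2) is formally dual. The case $x=1$ follows from $\chi(1)=1$ for every $\chi$ and $|\widehat{\mathbb{F}_q^\times}|=q-1$; the case $x=0$ follows from the convention $\chi(0)=0$ imposed on all characters. For $x\in\mathbb{F}_q^\times\setminus\{1\}$, I would need a character $\psi$ with $\psi(x)\neq 1$; granted such a $\psi$, the shift $\chi\mapsto\psi\chi$ permutes $\widehat{\mathbb{F}_q^\times}$ and yields $\psi(x)\sum_\chi\chi(x)=\sum_\chi(\psi\chi)(x)=\sum_\chi\chi(x)$, which forces the sum to vanish.

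The main obstacle is therefore the construction of this separating character $\psi$, and this is the one step where I expect to invoke structure beyond formal group theory. Fixing a generator $g$ of the cyclic group $\mathbb{F}_q^\times$ and a primitive $(q-1)$-th root of unity $\zeta\in\mathbb{C}^\times$, the assignment $g\mapsto\zeta$ extends uniquely to a character $\psi$ of $\mathbb{F}_q^\times$. Writing $x=g^k$ with $1\leq k\leq q-2$, I obtain $\psi(x)=\zeta^k\neq 1$, completing the argument.
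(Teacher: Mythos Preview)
Your argument is correct and is precisely the classical character-shift proof; the paper itself does not supply a proof of this lemma but merely cites \cite[Chapter~8]{ireland}, where essentially the same argument appears.
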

\par Let $\mathbb{Z}_p$ and $\mathbb{Q}_p$ denote the ring of $p$-adic integers and the field of $p$-adic numbers, respectively.
Let $\overline{\mathbb{Q}_p}$ be the algebraic closure of $\mathbb{Q}_p$ and $\mathbb{C}_p$ the completion of $\overline{\mathbb{Q}_p}$.
Let $\mathbb{Z}_q$ be the ring of integers in the unique unramified extension of $\mathbb{Q}_p$ with residue field $\mathbb{F}_q$.
We know that $\chi\in \widehat{\mathbb{F}_q^{\times}}$ takes values in the group of
$(q-1)$-th roots of unity in $\mathbb{C}^{\times}$. Since $\mathbb{Z}_q^{\times}$ contains all $(q-1)$-th roots of unity,
we can consider multiplicative characters on $\mathbb{F}_q^\times$
to be maps $\chi: \mathbb{F}_q^{\times} \rightarrow \mathbb{Z}_q^{\times}$.
Let $\omega: \mathbb{F}_q^\times \rightarrow \mathbb{Z}_q^{\times}$ be the Teichm\"{u}ller character.
For $a\in\mathbb{F}_q^\times$, the value $\omega(a)$ is just the $(q-1)$-th root of unity in $\mathbb{Z}_q$ such that $\omega(a)\equiv a \pmod{p}$.
\par We now introduce some properties of Gauss sums. For further details, see \cite{evans}. Let $\zeta_p$ be a fixed primitive $p$-th root of unity
in $\overline{\mathbb{Q}_p}$. The trace map $\text{tr}: \mathbb{F}_q \rightarrow \mathbb{F}_p$ is given by
\begin{align}
\text{tr}(\alpha)=\alpha + \alpha^p + \alpha^{p^2}+ \cdots + \alpha^{p^{r-1}}.\notag
\end{align}
Then the additive character
$\theta: \mathbb{F}_q \rightarrow \mathbb{Q}_p(\zeta_p)$ is defined by
\begin{align}
\theta(\alpha)=\zeta_p^{\text{tr}(\alpha)}.\notag
\end{align}
For $\chi \in \widehat{\mathbb{F}_q^\times}$, the \emph{Gauss sum} is defined by
\begin{align}
g(\chi):=\sum_{x\in \mathbb{F}_q}\chi(x)\theta(x).\notag
\end{align}
We let $T$ denote a fixed generator of $\widehat{\mathbb{F}_q^\times}$.
We now state two important properties of Gauss sums.
\begin{lemma}\emph{(\cite[Eqn. 1.12]{greene}).}\label{fusi3}
If $k\in\mathbb{Z}$ and $T^k\neq\varepsilon$, then
$$g(T^k)g(T^{-k})=qT^k(-1).$$
\end{lemma}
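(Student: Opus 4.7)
The plan is to prove this classical Gauss-sum identity by direct expansion, followed by a change of variables that decouples the two summation variables, and finally two applications of orthogonality. Writing $\chi=T^k$, the hypothesis $T^k\neq\varepsilon$ says exactly that $\chi$ is a nontrivial multiplicative character, and the goal reduces to $g(\chi)g(\chi^{-1})=q\chi(-1)$, which will also equal $qT^k(-1)$.

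First I would expand the left-hand side from the definition of the Gauss sum, noting that terms with $x=0$ or $y=0$ vanish because $\chi(0)=\chi^{-1}(0)=0$:
$$g(\chi)g(\chi^{-1})=\sum_{x\in\mathbb{F}_q^\times}\sum_{y\in\mathbb{F}_q^\times}\chi(x)\chi^{-1}(y)\,\theta(x+y).$$
For each fixed $y\in\mathbb{F}_q^\times$ I would reparametrize $x=yt$ with $t\in\mathbb{F}_q^\times$; using $\chi(x)\chi^{-1}(y)=\chi(xy^{-1})=\chi(t)$ and $x+y=y(t+1)$, the double sum becomes
$$g(\chi)g(\chi^{-1})=\sum_{t\in\mathbb{F}_q^\times}\chi(t)\sum_{y\in\mathbb{F}_q^\times}\theta(y(t+1)).$$

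Next I would evaluate the inner sum by orthogonality of the additive character. Since $\sum_{y\in\mathbb{F}_q}\theta(yz)$ equals $q$ when $z=0$ and $0$ otherwise, subtracting the $y=0$ contribution gives $\sum_{y\in\mathbb{F}_q^\times}\theta(y(t+1))=q-1$ when $t=-1$ and equals $-1$ for all other $t\in\mathbb{F}_q^\times$. Separating the $t=-1$ contribution yields
$$g(\chi)g(\chi^{-1})=\chi(-1)(q-1)-\sum_{\substack{t\in\mathbb{F}_q^\times\\ t\neq -1}}\chi(t)=q\chi(-1)-\sum_{t\in\mathbb{F}_q^\times}\chi(t).$$

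Finally I would invoke Lemma \ref{lemma2}(1): because $\chi=T^k$ is nontrivial, the remaining sum $\sum_{t\in\mathbb{F}_q^\times}\chi(t)$ vanishes, leaving $g(\chi)g(\chi^{-1})=q\chi(-1)=qT^k(-1)$, as claimed. There is no real obstacle in this argument; it is essentially bookkeeping, and the only small subtlety is correctly tracking the boundary term at $t=-1$ when combining the two cases of the additive-character orthogonality.
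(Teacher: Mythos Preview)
Your argument is correct and is the standard textbook proof of this identity. Note that the paper itself does not supply a proof of this lemma: it is simply quoted from \cite[Eqn.~1.12]{greene}, so there is no ``paper's own proof'' to compare against. Your derivation via the substitution $x=yt$, additive-character orthogonality, and then Lemma~\ref{lemma2}(1) is exactly how one establishes the result from first principles.
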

\begin{lemma}\emph{(\cite[Lemma 2.2]{Fuselier}).}\label{lemma1}
For all $\alpha \in \mathbb{F}_q^{\times}$, $$\theta(\alpha)=\frac{1}{q-1}\sum_{m=0}^{q-2}g(T^{-m})T^m(\alpha).$$
\end{lemma}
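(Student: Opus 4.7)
The plan is to unpack the Gauss sum on the right-hand side, swap the order of summation, and then collapse the inner sum via the second multiplicative orthogonality relation in Lemma \ref{lemma2}. The statement is really a Fourier-inversion identity for the character pairing on $\mathbb{F}_q^{\times}$, so the argument should be essentially mechanical once the pieces are in place.

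First, I would substitute the defining formula $g(T^{-m})=\sum_{x\in\mathbb{F}_q}T^{-m}(x)\theta(x)$ into the expression on the right. The $x=0$ term drops out because of the convention $\chi(0)=0$ recorded in the paragraph preceding Lemma \ref{lemma2}, so after interchanging the two finite sums one obtains
$$\frac{1}{q-1}\sum_{m=0}^{q-2}g(T^{-m})T^m(\alpha)=\frac{1}{q-1}\sum_{x\in\mathbb{F}_q^{\times}}\theta(x)\sum_{m=0}^{q-2}T^m(\alpha x^{-1}).$$

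Next, I would use the fact that $T$ is a fixed generator of the cyclic group $\widehat{\mathbb{F}_q^{\times}}$ of order $q-1$, so that as $m$ ranges over $\{0,1,\ldots,q-2\}$ the characters $T^m$ exhaust $\widehat{\mathbb{F}_q^{\times}}$. Lemma \ref{lemma2}(2), applied at the point $\alpha x^{-1}\in\mathbb{F}_q^{\times}$, then yields $q-1$ precisely when $\alpha x^{-1}=1$ and $0$ otherwise. Since $\alpha\in\mathbb{F}_q^{\times}$, the unique surviving term comes from $x=\alpha$, and the double sum collapses to $\theta(\alpha)$, which is the desired identity.

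There is no genuine obstacle here: the only point requiring minor care is that the normalizing factor $\frac{1}{q-1}$ is exactly what is needed to cancel the $q-1$ produced by the orthogonality relation, and that $\alpha\neq 0$ is essential so that $x=\alpha$ lies in the range $\mathbb{F}_q^{\times}$ of the remaining sum.
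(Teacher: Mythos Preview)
Your argument is correct: substituting the definition of the Gauss sum, interchanging the two finite sums, and applying Lemma~\ref{lemma2}(2) at $\alpha x^{-1}$ collapses the double sum to $\theta(\alpha)$, exactly as you describe. The paper itself does not prove this lemma but merely cites it from \cite{Fuselier}, so there is no in-paper argument to compare against; what you have written is precisely the standard Fourier-inversion proof one would expect.
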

\par
Finally, we recall the $p$-adic gamma function. For further details, see \cite{kob}.
For $n \in\mathbb{Z}^+$,
the $p$-adic gamma function $\Gamma_p(n)$ is defined as
\begin{align}
\Gamma_p(n):=(-1)^n\prod_{0<j<n,p\nmid j}j\notag
\end{align}
and one extends it to all $x\in\mathbb{Z}_p$ by setting $\Gamma_p(0):=1$ and
\begin{align}
\Gamma_p(x):=\lim_{n\rightarrow x}\Gamma_p(n)\notag
\end{align}
for $x\neq0$, where $n$ runs through any sequence of positive integers $p$-adically approaching $x$.
This limit exists, is independent of how $n$ approaches $x$,
and determines a continuous function on $\mathbb{Z}_p$ with values in $\mathbb{Z}_p^{\times}$.
Let $\pi \in \mathbb{C}_p$ be the fixed root of $x^{p-1} + p=0$ which satisfies
$\pi \equiv \zeta_p-1 \pmod{(\zeta_p-1)^2}$. Then the Gross-Koblitz formula relates Gauss sums and the $p$-adic gamma function as follows.
Recall that $\omega$ denotes the Teichm\"{u}ller character of $\mathbb{F}_q$.
\begin{theorem}\emph{(\cite[Gross-Koblitz]{gross}).}\label{thm4} For $a\in \mathbb{Z}$ and $q=p^r$,
\begin{align}
g(\overline{\omega}^a)=-\pi^{(p-1)\sum_{i=0}^{r-1}\langle\frac{ap^i}{q-1} \rangle}\prod_{i=0}^{r-1}\Gamma_p\left(\left\langle \frac{ap^i}{q-1} \right\rangle\right).\notag
\end{align}
\end{theorem}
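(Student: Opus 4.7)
The plan is to follow the original 1979 proof of Gross and Koblitz, which upgrades Stickelberger's mod-$\pi^{a+1}$ congruence for Gauss sums into an exact $p$-adic equality by means of Dwork's theory of analytic continuation. The argument proceeds in two stages: a reduction from arbitrary $q=p^r$ to the base case $q=p$, followed by an explicit evaluation of the Gauss sum over $\mathbb{F}_p$ using a $p$-adic analytic lift of the additive character.

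For the reduction, I would exploit Galois equivariance. The $r$ characters $\overline{\omega}^{ap^i}$ ($i=0,\ldots,r-1$) form a single Frobenius orbit in $\widehat{\mathbb{F}_q^\times}$, and their exponents modulo $q-1$ give precisely the numerators of the fractional parts $\langle ap^i/(q-1)\rangle$ appearing on the right-hand side. Using the Hasse--Davenport product relation for characters lifted from a subfield, together with the compatibility of $\Gamma_p$-values under $\pi$-adic Frobenius conjugation, the formula for $\mathbb{F}_q$ factors into $r$ contributions, each of which is an instance of the $\mathbb{F}_p$ formula applied to the appropriate conjugate exponent. This step is largely bookkeeping once the base case is in hand.

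For the base case $q=p$, the heart of the proof is the introduction of Dwork's splitting function
$$F(X) = E_p(\pi X) = \exp\Bigl(\pi X + \frac{(\pi X)^p}{p} + \frac{(\pi X)^{p^2}}{p^2} + \cdots\Bigr),$$
where $E_p$ is the Artin--Hasse exponential, which is known to have coefficients in $\mathbb{Z}_p$. The key analytic fact, due to Dwork, is the identity $F(\zeta)=\zeta_p^{\bar\zeta}$ for every Teichmüller unit $\zeta\in\mu_{p-1}$ reducing to $\bar\zeta\in\mathbb{F}_p^\times$. Substituting this into $g(\overline{\omega}^a)=\sum_{\zeta\in\mu_{p-1}}\zeta^{-a}F(\zeta)$ and applying orthogonality on $\mu_{p-1}$ (Lemma \ref{lemma2}) expresses the Gauss sum as $(p-1)$ times the sum of those Taylor coefficients of $F$ whose index is congruent to $a$ modulo $p-1$.

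The main obstacle, and the true content of the theorem, is identifying this explicit $p$-adic series with $-\pi^{a}\Gamma_p\bigl(a/(p-1)\bigr)$ in the range $1\le a\le p-2$. This requires combining Morita's recursive definition of $\Gamma_p$, together with its functional equation, with the combinatorial structure of the Artin--Hasse coefficients, and---crucially---Dwork's lemma on $p$-adic logarithms of overconvergent units, which is what promotes the weaker Stickelberger congruence $g(\overline{\omega}^a)\equiv -\pi^a/a!\pmod{\pi^{a+1}}$ to an exact equality. Once this identification is made on a dense subset of $\mathbb{Z}_p$, continuity of $\Gamma_p$ extends the formula to all of $\mathbb{Z}$, completing the proof.
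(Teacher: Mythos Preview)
The paper does not prove this statement at all: Theorem~\ref{thm4} is quoted from \cite{gross} as a black box and is used only as a tool in the proof of Theorem~\ref{MT_1}. There is therefore no ``paper's own proof'' to compare your attempt against. Your outline is a faithful sketch of the original Gross--Koblitz argument (Dwork's splitting function $E_p(\pi X)$, orthogonality on $\mu_{p-1}$, identification of the resulting series with $-\pi^a\Gamma_p(a/(p-1))$, and passage from $\mathbb{F}_p$ to $\mathbb{F}_q$), so if the goal were to supply a proof where the paper gives none, your roadmap is the right one. One small caution: the reduction from $q=p^r$ to $q=p$ is not via the Hasse--Davenport \emph{product} relation but rather via the Hasse--Davenport \emph{lifting} relation (or, in some treatments, is bypassed entirely by working directly over $\mathbb{F}_q$ with the trace-twisted Dwork exponential); you would want to be precise about which mechanism you invoke if you were to write this out in full.
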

\section{Proof of Theorem \ref{MT_1}}
We first state two lemmas which we will use to prove the theorem. The first lemma is a generalization of Lemma 4.1 in \cite{mccarthy2}.
For a proof, see \cite{BS1}.
\begin{lemma}\emph{(\cite[Lemma 3.1]{BS1}).}\label{lemma4}
Let $p$ be a prime and $q=p^r$. For $0\leq j\leq q-2$ and $t\in \mathbb{Z^+}$ with $p\nmid t$, we have
\begin{align}\label{eq8}
\omega(t^{tj})\prod_{i=0}^{r-1}\Gamma_p\left(\left\langle \frac{tp^ij}{q-1}\right\rangle\right)
\prod_{h=1}^{t-1}\Gamma_p\left(\left\langle\frac{hp^i}{t}\right\rangle\right)
=\prod_{i=0}^{r-1}\prod_{h=0}^{t-1}\Gamma_p\left(\left\langle\frac{p^ih}{t}+\frac{p^ij}{q-1}\right\rangle\right),
\end{align}
\begin{align}\label{eq9}
\omega(t^{-tj})\prod_{i=0}^{r-1}\Gamma_p\left(\left\langle\frac{-tp^ij}{q-1}\right\rangle\right)
\prod_{h=1}^{t-1}\Gamma_p\left(\left\langle \frac{hp^i}{t}\right\rangle\right)
=\prod_{i=0}^{r-1}\prod_{h=0}^{t-1}\Gamma_p\left(\left\langle\frac{p^i(1+h)}{t}-\frac{p^ij}{q-1}\right\rangle \right).
\end{align}
\end{lemma}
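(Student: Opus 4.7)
The plan is to reduce \eqref{eq8} and \eqref{eq9} termwise in the index $i$ to the $p$-adic analogue of Gauss's multiplication formula for $\Gamma_p$, and then assemble the $r$ per-index identities into the stated global form.

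First I would exploit the fact that $\gcd(p^i,t)=1$, so multiplication by $p^i$ is a bijection of $\mathbb{Z}/t\mathbb{Z}$. This gives
\[
\prod_{h=1}^{t-1}\Gamma_p\!\left(\left\langle\tfrac{hp^i}{t}\right\rangle\right)=\prod_{h=1}^{t-1}\Gamma_p\!\left(\tfrac{h}{t}\right),
\]
and similarly rewrites the inner product on the RHS of \eqref{eq8} with $h/t$ in place of $p^ih/t$ (the additive shift by $p^ij/(q-1)$ is preserved since the $p^i$-action on cosets is a bijection). The per-$i$ claim then reduces to an identity of the shape
\[
\omega(t)^{a_i(j)}\,\Gamma_p(\langle ty_i\rangle)\prod_{h=1}^{t-1}\Gamma_p\!\left(\tfrac{h}{t}\right)=\prod_{h=0}^{t-1}\Gamma_p\!\left(\left\langle\tfrac{h}{t}+y_i\right\rangle\right),
\]
where $y_i=p^ij/(q-1)$ and $a_i(j)$ is an integer exponent.

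Next I would obtain this per-$i$ identity from the $p$-adic Gauss multiplication theorem for $\Gamma_p$. One route is to pick the smallest $s\ge 1$ with $t\mid q^s-1$ (which exists since $p\nmid t$), apply the classical Hasse--Davenport product relation to $\mathbb{F}_{q^s}$-Gauss sums of the form $g(\overline{\omega}^{\,j}\psi_t^h)$ with $\psi_t$ of order $t$, and translate each Gauss sum into a $\Gamma_p$-product via the Gross--Koblitz formula \ref{thm4}. The floor-function arithmetic built into Gross--Koblitz produces exactly the displayed per-$i$ identity, with $a_i(j)$ made explicit in terms of $\lfloor tp^ij/(q-1)\rfloor$ and related quantities. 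Multiplying over $i=0,\ldots,r-1$ reassembles all the $\Gamma_p$-factors of \eqref{eq8}, and it remains to verify the congruence
\[
\sum_{i=0}^{r-1}a_i(j)\equiv tj\pmod{q-1},
\]
which collapses the product $\prod_i\omega(t)^{a_i(j)}$ into the single factor $\omega(t^{tj})$. The identity \eqref{eq9} then follows by running the same argument with $y_i$ replaced by $-y_i$: the relation $\langle -x\rangle=1-\langle x\rangle$ for $x\notin\mathbb{Z}$ converts the shift $h$ on the right into $1+h$ and flips the sign of the $\omega$-exponent, producing $\omega(t^{-tj})$.

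The principal obstacle is precisely this exponent bookkeeping. The identity $\sum_i a_i(j)\equiv tj\pmod{q-1}$ is not conceptually deep but requires careful manipulation of fractional and floor parts of $tp^ij/(q-1)$, using $p^r\equiv 1\pmod{q-1}$ to rotate the Frobenius orbit. This calculation is the substantive content of the lemma and is what distinguishes its $r$-fold form from a single application of the classical Gauss multiplication formula; everything else is formal reassembly.
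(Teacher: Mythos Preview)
The paper does not actually prove this lemma: it is quoted verbatim from \cite{BS1} with the remark ``For a proof, see \cite{BS1}'' and the observation that it generalizes Lemma~4.1 of \cite{mccarthy2}. So there is no in-paper proof to compare against.

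Your outline is nonetheless the standard one and matches what is done in \cite{BS1} and \cite{mccarthy2}: the identities are consequences of the $p$-adic Gauss multiplication formula for $\Gamma_p$, which one can either invoke directly or derive by applying Gross--Koblitz to the Hasse--Davenport product relation. Your bijection step (replacing $hp^i$ by $h$ modulo $t$) and the reduction of \eqref{eq9} to \eqref{eq8} via $j\mapsto -j$ are both correct. One simplification worth noting: rather than passing to $\mathbb{F}_{q^s}$ to manufacture a character of order $t$, the cited proofs apply the $p$-adic multiplication formula for $\Gamma_p$ directly at each rational argument; this sidesteps the extension-field detour and makes the exponent bookkeeping (your $\sum_i a_i(j)\equiv tj\pmod{q-1}$) more transparent, since the relevant power of $t$ in the multiplication formula is governed by the $p$-adic digit sum of the argument, which telescopes under the Frobenius rotation $p^i\mapsto p^{i+1}$.
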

\begin{lemma}\label{lemma5}
Let $d\neq p$ be a prime number such that $q=p^r\not\equiv1\pmod{d}$. Then, for $1\leq a\leq q-2$  and $0\leq i\leq r-1$ we have
\begin{align}\label{eq-51}
d\left\lfloor\frac{ap^i}{q-1}\right\rfloor+\left\lfloor\frac{-dap^i}{q-1}\right\rfloor
&=(d-1)\left\lfloor\frac{ap^i}{q-1}\right\rfloor+\sum_{h=1}^{d-1}\left\lfloor\left\langle\frac{hp^i}{d}\right\rangle-\frac{ap^i}{q-1}\right\rfloor-1.
\end{align}
\end{lemma}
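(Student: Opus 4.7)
The plan is to reduce the identity to a simpler one by subtracting $(d-1)\lfloor ap^i/(q-1)\rfloor$ from both sides. Setting $x=ap^i/(q-1)$, the claim becomes
\begin{equation*}
\lfloor x\rfloor+\lfloor -dx\rfloor=\sum_{h=1}^{d-1}\left\lfloor\left\langle\frac{hp^i}{d}\right\rangle-x\right\rfloor-1.
\end{equation*}
This is the form I would aim to prove.

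First, I would use that $d$ is prime with $d\neq p$, so $\gcd(p^i,d)=1$. Hence multiplication by $p^i$ permutes the nonzero residues modulo $d$, and as $h$ ranges over $1,\ldots,d-1$ the fractional parts $\langle hp^i/d\rangle$ run exactly through $1/d,2/d,\ldots,(d-1)/d$. So the sum on the right equals $\sum_{h=1}^{d-1}\lfloor h/d-x\rfloor$.

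Next, I would invoke Hermite's identity $\sum_{h=0}^{d-1}\lfloor y+h/d\rfloor=\lfloor dy\rfloor$ with $y=-x$, giving $\sum_{h=0}^{d-1}\lfloor h/d-x\rfloor=\lfloor -dx\rfloor$. Pulling off the $h=0$ term yields
\begin{equation*}
\sum_{h=1}^{d-1}\lfloor h/d-x\rfloor=\lfloor -dx\rfloor-\lfloor -x\rfloor.
\end{equation*}

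Finally, I would check that $x$ is not an integer: since $\gcd(p,q-1)=1$ we have $\gcd(p^i,q-1)=1$, and $1\le a\le q-2$ forces $q-1\nmid ap^i$. Therefore $\lfloor -x\rfloor=-\lfloor x\rfloor-1$, and plugging in gives $\sum_{h=1}^{d-1}\lfloor h/d-x\rfloor-1=\lfloor -dx\rfloor+\lfloor x\rfloor$, which is exactly what is needed. The only delicate point — and it is minor — is verifying the non-integrality of $x$, which is where the hypothesis $p\neq d$ (and the standard fact $p\nmid q-1$) enters; everything else is a combination of the permutation argument and Hermite's identity.
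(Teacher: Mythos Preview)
Your proof is correct. After the common reduction and the permutation observation (which the paper also uses, as its equation relating the sum over $\langle hp^i/d\rangle$ to the sum over $\langle h/d\rangle$), the paper proceeds by writing $\lfloor -dx\rfloor=dk+s$ with $0\le s\le d-1$, deducing $\lfloor x\rfloor=-k-1$, and then splitting $\sum_{h=1}^{d-1}\lfloor h/d-x\rfloor$ into the ranges $1\le h\le d-s-1$ (each term equal to $k$) and $d-s\le h\le d-1$ (each term equal to $k+1$), so that both sides come out to $s-d$. Your use of Hermite's identity $\sum_{h=0}^{d-1}\lfloor y+h/d\rfloor=\lfloor dy\rfloor$ replaces this case split entirely and is cleaner; in fact your argument never invokes the hypothesis $q\not\equiv 1\pmod d$, whereas the paper uses it to guarantee that $-dap^i/(q-1)$ is not an integer (via $\gcd(q-1,dp^i)=1$), so your route shows the identity holds under slightly weaker assumptions.

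One small slip in your closing commentary: the hypothesis $p\neq d$ is not what makes $x$ non-integral---that follows from $p\nmid q-1$ alone, as you in fact argue. The place $p\neq d$ is genuinely needed is the permutation step, ensuring $\gcd(p^i,d)=1$; you used it correctly there, just mislabeled it in the summary.
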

\begin{proof}
Let $\lfloor\frac{-dap^i}{q-1}\rfloor=dk+s$ for some $k, s \in \mathbb{Z}$ satisfying $0\leq s\leq d-1$.
Since $1\leq a\leq q-2$ and $(q-1, dp^i)=1$, we observe that $\frac{-dap^i}{q-1}$ is not an integer. This yields
\begin{align}
dk+s<\frac{-dap^i}{q-1}<dk+s+1,\notag
\end{align}
which implies
\begin{align}\label{eq-7}
\frac{s}{d}+k<\frac{-ap^i}{q-1}<k+\frac{s+1}{d}.
\end{align}
This is equivalent to
\begin{align}\label{eq-8}
-\frac{s+1}{d}-k<\frac{ap^i}{q-1}<-k-\frac{s}{d}.
\end{align}
From \eqref{eq-8} we have $\lfloor\frac{ap^i}{q-1}\rfloor=-k-1$, and then the left hand side of \eqref{eq-51} becomes $s-d$. 
Again, since $d$ is a prime and $d\neq p$, we observe that 
\begin{align}\label{eq-9}
\sum_{h=1}^{d-1}\left\lfloor\left\langle\frac{hp^i}{d}\right\rangle-\frac{ap^i}{q-1}\right\rfloor
=\sum_{h=1}^{d-1}\left\lfloor\left\langle\frac{h}{d}\right\rangle-\frac{ap^i}{q-1}\right\rfloor.
\end{align}
Thus, for $1\leq h\leq d-s-1$, \eqref{eq-7} yields 
$$\left\lfloor\left\langle\frac{h}{d}\right\rangle-\frac{ap^i}{q-1}\right\rfloor=k$$ so that
\begin{align}\label{eq-10}
\sum_{h=1}^{d-s-1}\left\lfloor\left\langle\frac{h}{d}\right\rangle-\frac{ap^i}{q-1}\right\rfloor=(d-s-1)k.
\end{align}
Also, for $d-s\leq h\leq d-1$, \eqref{eq-7} yields 
$$\left\lfloor\left\langle\frac{h}{d}\right\rangle-\frac{ap^i}{q-1}\right\rfloor=k+1$$ so that
\begin{align}\label{eq-11}
\sum_{h=d-s}^{d-1}\left\lfloor\left\langle\frac{h}{d}\right\rangle-\frac{ap^i}{q-1}\right\rfloor=s(k+1).
\end{align}
Combining \eqref{eq-10}, \eqref{eq-11} and using the fact that $\lfloor\frac{ap^i}{q-1}\rfloor=-k-1$ we obtain that the right hand side of \eqref{eq-51} also becomes $s-d$. This completes the proof of the lemma.
\end{proof}
\noindent \textbf{Proof of Theorem \ref{MT_1}}.
Let $N_q^{A}(\lambda)$ denote the number of points on the Dwork hypersurface $X_{\lambda}^d$ in $\mathbb{A}^d(\mathbb{F}_q)$. Then
\begin{align}\label{eq-6}
\#X_\lambda^d(\mathbb{F}_q)=\frac{N_q^{A}(\lambda)-1}{q-1}.
\end{align}
Letting $\overline{x}=(x_1,x_2,\ldots,x_d)$ and $f(\overline{x})=x_1^d+x_2^d+\cdots+x_d^d-d\lambda x_1x_2\cdots x_d$ and using the identity
\begin{align}
\sum_{z\in\mathbb{F}_q}\theta(zf(\overline{x}))=\left\{
                                                  \begin{array}{ll}
                                                    q, & \hbox{if $f(\overline{x})=0$;} \\
                                                    0, & \hbox{if $f(\overline{x})\neq0$,}
                                                  \end{array}
                                                \right.\notag
\end{align} we can write
\begin{align}\label{eq-1}
q\cdot N_q^A(\lambda)&=q^d+\sum_{z\in\mathbb{F}_q^{\times}}\sum_{x_i\in\mathbb{F}_q}\theta(zf(\overline{x}))\notag\\
&=q^d+\sum_{z,x_i\in\mathbb{F}_q^{\times}}\theta(zf(\overline{x}))+\sum_{\substack{z\in\mathbb{F}_q^{\times}\\
some~x_i=0}}\theta(zf(\overline{x})).
\end{align}
We now rewrite the second summation: Let $f_1(\overline{x})=x_1^d+x_2^d+\cdots+x_d^d$ and $N_q^{\prime}$ be the number of solutions to $f_1(\overline{x})=0$ in $\mathbb{A}^d(\mathbb{F}_q)$. As we know that $x\mapsto x^d$ is an automorphism of $\mathbb{F}_q^{\times}$ when $d$ is prime and $q\not\equiv1\pmod{d}$, so $N_q^{\prime}=q^{d-1}$. Also, proceeding as above we have
\begin{align}
q\cdot N_q^{\prime}=q^d+\sum_{z,x_i\in\mathbb{F}_q^{\times}}\theta(zf_1(\overline{x}))
+\sum_{\substack{z\in\mathbb{F}_q^{\times}\\some~x_i=0}}\theta(zf_1(\overline{x})).\notag
\end{align}
Thus,
\begin{align}\label{eq-2}
\sum_{\substack{z\in\mathbb{F}_q^{\times}\\some~x_i=0}}\theta(zf_1(\overline{x}))
=-\sum_{z,x_i\in\mathbb{F}_q^{\times}}\theta(zf_1(\overline{x})).
\end{align}
Since \begin{align}\label{eq-3}
\sum_{\substack{z\in\mathbb{F}_q^{\times}\\some~x_i=0}}\theta(zf(\overline{x}))
=\sum_{\substack{z\in\mathbb{F}_q^{\times}\\some~x_i=0}}\theta(zf_1(\overline{x}))
\end{align}
so using \eqref{eq-2} and \eqref{eq-3} we can write \eqref{eq-1} as follows
\begin{align}\label{eq-4}
q\cdot N_q^A(\lambda)&=q^d+\sum_{z,x_i\in\mathbb{F}_q^{\times}}\theta(zf(\overline{x}))
-\sum_{z,x_i\in\mathbb{F}_q^{\times}}\theta(zf_1(\overline{x}))\notag\\
&=q^d+A-B,
\end{align}
where $A=\displaystyle\sum_{z,x_i\in\mathbb{F}_q^{\times}}\theta(zf(\overline{x}))$ and $B=\displaystyle\sum_{z,x_i\in\mathbb{F}_q^{\times}}\theta(zf_1(\overline{x}))$.
First we evaluate
\begin{align}
B&=\sum_{z,x_i\in\mathbb{F}_q^{\times}}\theta(zf_1(\overline{x}))\notag\\
&=\sum_{z,x_i\in\mathbb{F}_q^{\times}}\theta(zx_1^d)\theta(zx_2^d)\cdots\theta(zx_d^d).\notag
\end{align}
Lemma \ref{lemma1} gives
\begin{align}
B&=\frac{1}{(q-1)^d}\sum_{a_1,a_2,\ldots,a_d=0}^{q-2}g(T^{-a_1})g(T^{-a_2})\cdots g(T^{-a_d})\notag\\
&\times\sum_{z,x_i\in\mathbb{F}_q^{\times}}T^{a_1}(zx_1^d)T^{a_2}(zx_2^d)\cdots T(zx_d^d)\notag\\
&=\frac{1}{(q-1)^d}\sum_{a_1,a_2,\ldots,a_d=0}^{q-2}g(T^{-a_1})g(T^{-a_2})\cdots g(T^{-a_d})
\sum_{x_1\in\mathbb{F}_q^{\times}}T^{da_1}(x_1)\notag\\
&\times\sum_{x_2\in\mathbb{F}_q^{\times}}T^{da_2}(x_2)\cdots\sum_{x_d\in\mathbb{F}_q^{\times}}
T^{da_d}(x_d)\sum_{z\in\mathbb{F}_q^{\times}}T^{a_1+a_2+\cdots+a_d}(z).\notag
\end{align}
The inner sums in the above expression give non zero value only if $da_1, da_2, \ldots, da_d\equiv0\pmod{q-1}$ and $a_1+a_2+\cdots+a_d\equiv0\pmod{q-1}$. Since $q\not\equiv1\pmod{d}$ these congruences simultaneously hold only if $a_1=a_2=\cdots=a_d=0$. Finally, using the fact that $g(\varepsilon)=-1$ we obtain $B=1-q$.
Now,
\begin{align}
A&=\sum_{z,x_i\in\mathbb{F}_q^{\times}}\theta(zf(\overline{x}))\notag\\
&=\sum_{z,x_i\in\mathbb{F}_q^{\times}}\theta(zx_1^d)\theta(zx_2^d)\cdots\theta(zx_d^d)
\theta(-d\lambda zx_1x_2\cdots x_d).\notag
\end{align}
If we use Lemma \ref{lemma1} then we have
\begin{align}
A&=\frac{1}{(q-1)^{d+1}}\sum_{a_1, a_2, \ldots, a_d, a_{d+1}=0}^{q-2}g(T^{-a_1})g(T^{-a_2})\cdots
g(T^{-a_d})g(T^{-a_{d+1}})T^{a_{d+1}}(-d\lambda)\notag\\
&\times\sum_{x_1\in\mathbb{F}_q^{\times}}T^{da_1+a_{d+1}}(x_1)\sum_{x_2\in\mathbb{F}_q^{\times}}T^{da_2+a_{d+1}}(x_2)
\cdots\sum_{x_d\in\mathbb{F}_q^{\times}}T^{da_d+a_{d+1}}(x_d)\notag\\
&\times\sum_{z\in\mathbb{F}_q^{\times}}T^{a_1+a_2+\cdots a_d+a_{d+1}}(z).\notag
\end{align}
The inner sums in the above expression yield non zero value only when the following congruences hold:
$da_1+a_{d+1}, da_2+a_{d+1}, \ldots, da_d+a_{d+1}\equiv0\pmod{q-1}$ and $a_1+a_2+\cdots+a_d+a_{d+1}\equiv0\pmod{q-1}$, which implies that $a_1=a_2=\cdots=a_d=a$ (say) and $a_{d+1}=-da$ as $q\not\equiv 1 \pmod{d}$.
Thus we have
\begin{align}
A&=\sum_{a=0}^{q-2}g^d(T^{-a})g(T^{da})T^{-da}(-d\lambda).\notag
\end{align}
Taking $T=\omega$ and then using Gross-Koblitz formula we obtain
\begin{align}\label{eq-5}
A&=\sum_{a=0}^{q-2}\pi^{(p-1)\sum_{i=0}^{r-1}\{d\langle\frac{ap^i}{q-1}\rangle+\langle\frac{-dap^i}{q-1}\rangle\}}
~\overline{\omega}^{da}(-d\lambda)\prod_{i=0}^{r-1}\Gamma_p^d\left(\left\langle\frac{ap^i}{q-1}\right\rangle\right)
\Gamma_p\left(\left\langle\frac{-dap^i}{q-1}\right\rangle\right).
\end{align}
Applying Lemma \ref{lemma4} for $t=d$ and $j=a$ we have
\begin{align}
\prod_{i=0}^{r-1}\Gamma_p\left(\left\langle\frac{-dap^i}{q-1}\right\rangle\right)=\omega^{da}(d)\prod_{i=0}^{r-1}
\frac{\prod_{h=1}^d\Gamma_p(\langle(\frac{h}{d}-\frac{a}{q-1})p^i\rangle)}
{\prod_{h=1}^{d-1}\Gamma_p(\langle\frac{hp^i}{d}\rangle)}.\notag
\end{align}
Substituting this into \eqref{eq-5} we obtain
\begin{align}
A&=\sum_{a=0}^{q-2}\pi^{(p-1)\sum_{i=0}^{r-1}\{-d\lfloor\frac{ap^i}{q-1}\rfloor-\lfloor\frac{-dap^i}{q-1}\rfloor\}}
~\overline{\omega}^{da}(-\lambda)\notag\\
&\times\prod_{i=0}^{r-1}\Gamma_p^d\left(\left\langle\frac{ap^i}{q-1}\right\rangle\right)
\frac{\prod_{h=1}^d\Gamma_p(\langle(\frac{h}{d}-\frac{a}{q-1})p^i\rangle)}
{\prod_{h=1}^{d-1}\Gamma_p(\langle\frac{hp^i}{d}\rangle)}\notag\\
&=\sum_{a=0}^{q-2}\pi^{(p-1)\sum_{i=0}^{r-1}\{-d\lfloor\frac{ap^i}{q-1}\rfloor-\lfloor\frac{-dap^i}{q-1}\rfloor\}}
~\overline{\omega}^{da}(-\lambda)\notag\\
&\times\prod_{i=0}^{r-1}\Gamma_p\left(\left\langle\frac{ap^i}{q-1}\right\rangle\right)\Gamma_p\left(\left\langle\left(1-\frac{a}{q-1}\right)p^i\right\rangle\right)
\Gamma_p^{d-1}\left(\left\langle\frac{ap^i}{q-1}\right\rangle\right)\notag\\
&\times \prod_{h=1}^{d-1}\frac{\Gamma_p(\langle(\frac{h}{d}-\frac{a}{q-1})p^i\rangle)}
{\Gamma_p(\langle\frac{hp^i}{d}\rangle)}.\notag
\end{align}
Upon simplification, we obtain
\begin{align}
A&=1+\sum_{a=1}^{q-2}(-p)^{\sum_{i=0}^{r-1}\{-d\lfloor\frac{ap^i}{q-1}\rfloor-\lfloor\frac{-dap^i}{q-1}\rfloor\}}
~\overline{\omega}^{da}(-\lambda)\notag\\
&\times\prod_{i=0}^{r-1}\Gamma_p\left(\left\langle\frac{ap^i}{q-1}\right\rangle\right)\Gamma_p\left(\left\langle\left(1-\frac{a}{q-1}\right)p^i\right\rangle\right)
\Gamma_p^{d-1}\left(\left\langle\frac{ap^i}{q-1}\right\rangle\right)\notag\\
&\times \prod_{h=1}^{d-1}\frac{\Gamma_p(\langle(\frac{h}{d}-\frac{a}{q-1})p^i\rangle)}
{\Gamma_p(\langle\frac{hp^i}{d}\rangle)}.\notag
\end{align}
From Lemma 3.4 of \cite{BSM}, for $0<a\leq q-2$ we have
$$\displaystyle\prod_{i=0}^{r-1}\Gamma_p\left(\left\langle\frac{ap^i}{q-1}\right\rangle\right)\Gamma_p\left(\left\langle\left(1-\frac{a}{q-1}\right)p^i\right\rangle\right)
=(-1)^r\overline{\omega}^a(-1).$$ 
Applying this in the above expression we deduce that
\begin{align}
A&=1+\sum_{a=1}^{q-2}(-1)^r(-p)^{\sum_{i=0}^{r-1}\{-d\lfloor\frac{ap^i}{q-1}\rfloor-\lfloor\frac{-dap^i}{q-1}\rfloor\}}
~\overline{\omega}^{da}(\lambda)\notag\\
&\times\prod_{i=0}^{r-1}\Gamma_p^{d-1}\left(\left\langle\frac{ap^i}{q-1}\right\rangle\right)
\prod_{h=1}^{d-1}\frac{\Gamma_p(\langle(\frac{h}{d}-\frac{a}{q-1})p^i\rangle)}
{\Gamma_p(\langle\frac{hp^i}{d}\rangle)}.\notag
\end{align}
Now, Lemma \ref{lemma5} yields
\begin{align}
A&=1+\sum_{a=1}^{q-2}(-1)^r(-p)^{\sum_{i=0}^{r-1}\{1-(d-1)\lfloor\frac{ap^i}{q-1}\rfloor
-\sum_{h=1}^{d-1}\lfloor\langle\frac{hp^i}{d}\rangle-\frac{ap^i}{q-1}\rfloor\}}~\overline{\omega}^{da}(\lambda)\notag\\
&\times\prod_{i=0}^{r-1}\Gamma_p^{d-1}\left(\left\langle\frac{ap^i}{q-1}\right\rangle\right)
\prod_{h=1}^{d-1}\frac{\Gamma_p(\langle(\frac{h}{d}-\frac{a}{q-1})p^i\rangle)}
{\Gamma_p(\langle\frac{hp^i}{d}\rangle)}.\notag
\end{align}
Adding and subtracting the term under summation for $a=0$ we obtain
\begin{align}
A&=1-q+q\sum_{a=0}^{q-2}(-p)^{\sum_{i=0}^{r-1}\{-(d-1)\lfloor\frac{ap^i}{q-1}\rfloor
-\sum_{h=1}^{d-1}\lfloor\langle\frac{hp^i}{d}\rangle-\frac{ap^i}{q-1}\rfloor\}}~\overline{\omega}^{da}(\lambda)\notag\\
&\times\prod_{i=0}^{r-1}\Gamma_p^{d-1}\left(\left\langle\frac{ap^i}{q-1}\right\rangle\right)
\prod_{h=1}^{d-1}\frac{\Gamma_p(\langle(\frac{h}{d}-\frac{a}{q-1})p^i\rangle)}
{\Gamma_p(\langle\frac{hp^i}{d}\rangle)}\notag\\
&=1-q-q(q-1){_{d-1}G}_{d-1}\left[\begin{array}{cccc}
                           \frac{1}{d}, & \frac{2}{d}, & \ldots, & \frac{d-1}{d} \\
                            0, & 0, & \ldots, & 0
                           \end{array}|\lambda^d
\right]_q.\notag
\end{align}
Finally, substituting the values of $A$ and $B$ into \eqref{eq-4} and then using \eqref{eq-6} we deduce the result. This completes the proof of Theorem \ref{MT_1}.
\bibliographystyle{amsplain}

\begin{thebibliography}{10}
\bibitem{BS2}
R. Barman and N. Saikia, {\it Certain Transformations for Hypergeometric series in the p-adic setting}, Int. J. Number Theory 11 (2) (2015), 645--660.

\bibitem{BS1}
R. Barman and N. Saikia, {\it $p$-Adic gamma function and the trace of Frobenius of elliptic curves},
J. Number Theory 140 (7) (2014), 181--195.

\bibitem{BSM}
R. Barman, N. Saikia and D. McCarthy, {\it Summation identities and special values of hypergeometric series in the $p$-adic setting}, J. Number Theory 153 (2015), 63--84.

\bibitem{evans}
B. Berndt, R. Evans, and K. Williams, {\it Gauss and Jacobi Sums}, Canadian Mathematical Society Series of Monographs and Advanced Texts,
A Wiley-Interscience Publication, John Wiley \& Sons, Inc., New York, (1998).

\bibitem {Fuselier} J. Fuselier, \textit{Hypergeometric functions over $\mathbb{F}_p$ and relations to elliptic curve and modular forms},
Proc. Amer. Math. Soc. 138 (2010), 109--123.

\bibitem{goodson}
H. Goodson, {\it Hypergeometric functions and relations to Dwork hypersurfaces}, arXiv:1510.07661v1 (2015).

\bibitem{greene}
J. Greene, {\it Hypergeometric functions over finite fields}, Trans. Amer. Math. Soc. 301 (1) (1987), 77--101.

\bibitem{gross}
B. H. Gross and N. Koblitz, {\it Gauss sum and the $p$-adic $\Gamma$-function}, Annals of Mathematics 109 (1979), 569--581.

\bibitem{ireland}
K. Ireland and M. Rosen, {\it A Classical Inroduction to Modern Number Theory}, Springer International Edition, Springer, (2005).

\bibitem{kob} N. Koblitz, {\it $p$-adic analysis: a short course on recent work}, London Math. Soc. Lecture
Note Series, 46. Cambridge University Press, Cambridge-New York, (1980).

\bibitem{kob2}
N. Koblitz, {\it The number of points on certain families of hypersurfaces over finite fields}, Compositio Math., 48 (1) (1983), 3--23.



\bibitem{mccarthy2}
D. McCarthy, {\it The trace of Frobenius of elliptic curves and the $p$-adic gamma function}, Pacific J. Math. 261 (1) (2013), 219--236.

\bibitem{dermot2}
D. McCarthy, {\it On a supercongruence conjecture of Rodriguez-Villegas}, Proc. Amer. Math. Soc.  140 (2012), 2241--2254.


\end{thebibliography}

\end{document}